\newtheorem{proposition}{Proposition}[section]
\newtheorem{thm}[proposition]{Theorem}
\newtheorem{lemma}[proposition]{Lemma}
\newtheorem{defn}[proposition]{Definition}
\newcommand{\otp}{\mathrm{otp}}
\newcommand{\forces}{\Vdash}
\newcommand{\restrict}{\upharpoonright}
\newcommand{\NS}{{\sf NS}}
\newcommand{\CFB}{{\sf CFB}}
\newcommand{\bbP}{\mathbb{P}}
\newcommand{\bbQ}{\mathbb{Q}}
\newcommand{\cP}{\mathcal{P}}
\newcommand{\bls}{\vspace{\baselineskip}}
\newtheorem{remark}[proposition]{Remark}
\title{{\sf PFA} and the definability of the nonstationary ideal}
\author{Stefan Hoffelner\thanks{The first and third authors are funded by the Deutsche Forschungsgemeinschaft (DFG, German Research
		Foundation) under Germany’s Excellence Strategy EXC 2044 390685587, Mathematics M\"unster:
		Dynamics - Geometry - Structure.} \and Paul Larson \and Ralf Schindler
		\and Liuzhen Wu}
\date{April 2023}
\begin{document}

\maketitle

\begin{abstract}
We produce, relative to a ${\sf ZFC}$ model with a supercompact cardinal, a ${\sf ZFC}$ model of the Proper Forcing Axiom in which the nonstationary ideal on $\omega_1$ is $\Pi_1$-definable in a parameter from $H_{\aleph_2}$.     
\end{abstract}

\section{Introduction}

A subset of $\omega_1$ is said to be nonstationary if there exists a club $C \subseteq \omega_1$ disjoint from it. It follows that the ideal ${\sf NS}_{\omega_1}$ of nonstationary subsets of $\omega_{1}$ is $\Sigma_1$-definable in the parameter $\omega_1$. Theorem 1.3 of \cite{HLSW} shows that in the presence of ${\sf BPFA}$ (the Bounded Proper Forcing Axiom) ${\sf NS}_{\omega_1}$ may also be $\Pi_1$-definable in the parameter $\omega_1$. On the other hand, Martin's Maximum and Woodin's axiom (*) each imply that ${\sf NS}_{\omega_1}$ cannot be $\Pi_1$-definable over $H_{\aleph_{2}}$ in any parameter from $H_{\aleph_2}$, cf.\ \cite[Theorem 2.3]{HLSW} and \cite{SS}.
The current paper strengthens Theorem 1.3 of \cite{HLSW} by proving the following theorem. 


\begin{thm}\label{mainthrm}
	If there exists a supercompact cardinal, then there exists a proper forcing extension in which ${\sf PFA}$ holds and 
	${\sf NS}_{\omega_{1}}$ is $\Pi_{1}$-definable in a parameter from $H_{\aleph_{2}}$.
\end{thm}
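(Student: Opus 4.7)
The plan is to construct the model by a countable support iteration of length a supercompact cardinal $\kappa$ that simultaneously forces PFA in Baumgartner's style and interleaves a coding iteration which renders $\NS_{\omega_1}$ $\Pi_1$-definable in the resulting extension. This generalizes the BPFA construction of \cite{HLSW}, the main new difficulty being that the coding must cohere with a full countable-support iteration of proper forcings rather than with a single step of bounded forcing.

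Concretely, I would fix a Laver function $\ell\colon\kappa\to V_\kappa$ and define a countable support iteration $\langle\bbP_\alpha,\dot{\bbQ}_\alpha : \alpha<\kappa\rangle$ in which each iterand is either (a) a proper poset named by $\ell(\alpha)$, as in Baumgartner's proof of PFA from a supercompact, or (b) a coding poset which appends data to a distinguished sequence $\vec p = \langle p_\beta : \beta<\omega_1\rangle$ certifying the nonstationarity of subsets of $\omega_1$ introduced so far. The coding poset at stage $\alpha$ must be proper in the intermediate extension, and is likely a variant of the club-shooting or localization forcings appearing in the proof of Theorem 1.3 of \cite{HLSW} and in related work; it has to be engineered so that every $S\subseteq\omega_1$ eventually appearing is, at some later stage, either coded into $\vec p$ (if nonstationary) or provably not coded (if stationary).

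In the resulting model $V[G]$, PFA follows by the standard supercompactness reflection argument provided the coding iterands are proper and are handled correctly by the Laver bookkeeping. The $\Pi_1$-definability is then obtained by taking the parameter to be the final coding sequence $\vec p$, which by construction lies in $H_{\aleph_2}^{V[G]}$, and by writing down a $\Pi_1$ formula $\varphi(S,\vec p)$ of the form $\forall \beta<\omega_1\,\psi(\beta,S,\vec p)$ asserting that $\vec p$ canonically furnishes a club disjoint from $S$; one direction uses $\Sigma_1$-upward absoluteness, the other relies on the exhaustive coding together with the fact that no later iterand can turn a coded nonstationary set back into a stationary one.

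The principal obstacle is the joint design and verification of the coding poset. It must be proper, so that the final iteration remains proper and preserves $\omega_1$; it must interact correctly with the generic proper forcings prescribed by $\ell$, in the sense that the partial coding is never overwritten in a way that falsifies $\varphi$; and it must be sufficiently rich to capture every nonstationary subset of $\omega_1$ that appears in $V[G]$, which in turn requires a careful reflection and anticipation argument at $\kappa$. Threading these requirements --- properness, preservation of the coding under further proper forcing, and completeness of the coding in capturing $\NS_{\omega_1}^{V[G]}$ --- is the core technical challenge.
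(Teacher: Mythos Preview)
Your outline has the coding running in the wrong direction, and this is not a cosmetic issue. You propose to certify \emph{nonstationary} sets by recording clubs disjoint from them, and then to define $\NS_{\omega_1}$ by a formula saying ``$\vec p$ furnishes a club disjoint from $S$''. But this is a $\Sigma_1$ assertion in the parameter, and $\NS_{\omega_1}$ is already $\Sigma_1$ without any coding; moreover, a single $\omega_1$-sequence of clubs cannot separate all $2^{\aleph_1}$ nonstationary sets from the stationary ones. What is needed is a $\Sigma_1$ witness for \emph{stationarity}, so that $\NS_{\omega_1}$ becomes $\Pi_1$. The paper does this by first generically adding a partition $\vec A=\langle A_\alpha:\alpha<\omega_1\rangle$ of $\omega_1$ into stationary pieces (via $\mathrm{Col}(\omega_1,\omega_1)$), and then, for each stationary $S$, forcing some measurable $\kappa$ into the tilde of $\vec A(S)=\bigcup_{\alpha\in S}A_\alpha$. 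In the final model, $S$ is stationary iff it is certified at some $\gamma<\omega_2$ via $\vec A$, a $\Sigma_1$ condition in the parameter $\vec A$.

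The second gap is your assumption that the coding iterands can simply be taken proper. The certification forcing $Q(\vec A(S),\kappa)$ is only \emph{semi}-proper (Lemma~\ref{semi-proper}); a master condition for a countable $X\prec H_\theta$ requires $\otp(X\cap\kappa)\in\vec A(S)$, which in general forces one to enlarge $X$ end-extending in $\kappa$ while fixing $X\cap\omega_1$. The paper nonetheless obtains a \emph{proper} full iteration by exploiting the first step: when building an $(X,\bbP_\delta)$-generic condition, one first chooses $q\in\bbP_0$ below $p(0)$ so that $q(\otp(X\cap\kappa_\xi))=\alpha_\xi\in S_\xi$ for every coding stage $\xi\in X$. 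This forces $\otp(X\cap\kappa_\xi)$ into the right piece of the partition \emph{without} moving $X$, so the usual proper-iteration master-condition argument goes through (Lemma~\ref{lemma}). The bookkeeping $\langle\dot S_\xi,\alpha_\xi,\kappa_\xi\rangle$ is set up in advance precisely to make this choice of $q(0)$ possible. This interaction between the initial $\mathrm{Col}(\omega_1,\omega_1)$ step and the later semi-proper coding steps is the heart of the argument, and nothing in your sketch anticipates it.
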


Theorem \ref{mainthrm} follows from Theorem \ref{mainthrmdetail} below. The parameter cannot be removed from Theorem \ref{mainthrm}, as Corollary 4.13 of \cite{LSS} shows that under {\sf PFA}, ${\sf NS}_{\omega_1}$ is not $\Pi_1$-definable in the parameter $\omega_1$.

The overall strategy of the proof is as follows. First the parameter $A$ (a partition of $\omega_{1}$ into $\aleph_{1}$-many pieces) is added generically by countable approximations. This is followed by a countable support iteration which interleaves a certification forcing at successor stages with the standard ${\sf PFA}$ iteration at limit stages (due to Baumgartner; see \cite{FMS} for the corresponding iteration for Martin's Maximum). The entire iteration (including the first step adding $A$) is proper, although the tails of the iteration are only semi-proper over the nontrivial  initial extensions. One potentially novel aspect of the construction is that the bookkeeping used in the certification part of the iteration is chosen with some care, in order to enable the construction of suitably generic conditions. The certification mechanism used is not tied closely to the nonstationary ideal, and could be used to code other ideals. 

In the model produced, ${\sf NS}_{\omega_{1}}$ is not saturated (and in fact Canonical Function Bounding fails, see Theorem \ref{mainthrmdetail}). It remains open whether ${\sf PFA}$ plus the saturation of ${\sf NS}_{\omega_{1}}$ (or Canonical Function Bounding) is consistent with the $\Pi_{1}$-definability of ${\sf NS}_{\omega_{1}}$ relative to a subset of $\omega_{1}$.

\section{A coding machinery}

We write $\otp(a)$ for the ordertype of a set $a$ of ordinals. Given a set $A \subseteq \omega_{1}$, $\tilde{A}$ (``$A$-tilde") is the set of $\gamma \in [\omega_{1}, \omega_{2})$ for which there is a bijection $b \colon \omega_{1} \to \gamma$ such that $\{ \alpha < \omega_{1} : \otp(b[\alpha]) \in B\}$ contains a club (see \cite{hugh, size}).

Let $A$ be a subset of $\omega_{1}$, and let $\gamma \geq \omega_{1}$ be an ordinal. We let $Q(A,\gamma)$ denote the natural partial order to force $\gamma$ into $\tilde{A}$. 
That is, conditions in $Q(A, \gamma)$ are sequences \[p = \langle a_{\alpha} : \alpha \leq \zeta \rangle,\] for some countable ordinal $\zeta$, such that 
\begin{itemize}
	\item for all $\alpha \leq \zeta$, $a_{\alpha} \in [\gamma]^\omega$ and $\otp(a_{\alpha}) \in S$,
	\item $a_{\alpha} \subsetneq a_{\beta}$ when $\alpha < \beta \leq \zeta$ and 
	\item $a_{\beta} = \bigcup_{\alpha < \beta} a_{\alpha}$ when $\beta \leq \zeta$ is a limit ordinal. 
\end{itemize}
The order on $Q(A, \gamma)$ is defined by setting $p \leq q$ to hold if $p$ end-extends $q$.

We will use partial orders of the form $Q(A, \gamma)$ only the in the case where $\gamma$ is a measurable cardinal. In doing so, we will be using the following standard fact, which appears in many places, including Lemma 1.1.21 of \cite{stat-tower-book}. We include a proof for the convenience of the reader. 

\begin{lemma}\label{endexlem} Suppose that $\kappa$ is a measurable cardinal, $\theta > 2^{\kappa}$ is a regular cardinal, $A \subseteq \omega_{1}$ is stationary and $X \prec H_{\theta}$ is countable with $\kappa \in X$. Then there exists a countable $Y \prec H_{\theta}$ such that
	\begin{itemize}
		\item $X \subseteq Y$, 
		\item $X \cap \omega_{1} = Y \cap \omega_{1}$ and 
		\item $\otp(Y \cap \kappa) \in A$. 
	\end{itemize}
\end{lemma}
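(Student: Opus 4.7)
The plan is to exploit the measurability of $\kappa$ via a normal measure placed inside $X$ itself. By elementarity of $X \prec H_\theta$, since $\kappa \in X$ and $H_\theta$ sees that $\kappa$ is measurable, some $U \in X$ is a normal ultrafilter on $\kappa$ according to $H_\theta$ (equivalently, according to $V$, since $\theta > 2^\kappa$).

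The technical core is the following freezing observation: for any countable $Z \prec H_\theta$ containing $U$, the intersection
\[
B_Z := \bigcap \{ f^{-1}[\{c_f\}] : f \in Z,\ f \colon \kappa \to \omega_1 \},
\]
where $c_f < \omega_1$ is the unique value with $f^{-1}[\{c_f\}] \in U$ (unique by $\kappa$-completeness of $U$ together with $\omega_1 < \kappa$), is itself an element of $U$ as the intersection of countably many $U$-sets. For any $\beta \in B_Z$ we have $\mathrm{Hull}^{H_\theta}(Z \cup \{\beta\}) \cap \omega_1 = Z \cap \omega_1$: every new ordinal $\tau^{H_\theta}(\vec z, \beta) < \omega_1$ arises as $f(\beta)$ for the $Z$-definable function $\gamma \mapsto \tau^{H_\theta}(\vec z, \gamma)$ (truncated to $0$ where not countable), so $\beta \in B_Z$ forces $f(\beta) = c_f$, with $c_f \in Z$ by elementarity of $Z$ together with $U, f \in Z$.

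With this in hand, I would build a continuous, strictly increasing chain $\langle Y_\alpha : \alpha < \omega_1 \rangle$ of countable elementary substructures of $H_\theta$, setting $Y_0 = X$, $Y_{\alpha+1} = \mathrm{Hull}^{H_\theta}(Y_\alpha \cup \{\beta_\alpha\})$ for some $\beta_\alpha \in B_{Y_\alpha} \setminus Y_\alpha$, and $Y_\lambda = \bigcup_{\alpha < \lambda} Y_\alpha$ at limit $\lambda$. Since $U \in Y_0 \subseteq Y_\alpha$ at every stage, the freezing observation yields $Y_\alpha \cap \omega_1 = X \cap \omega_1$ for all $\alpha < \omega_1$. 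Each successor adjoins a fresh ordinal below $\kappa$, so transfinite induction shows that $g(\alpha) := \otp(Y_\alpha \cap \kappa) \geq \alpha$, with $g$ continuous and nondecreasing.

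To conclude, the standard fixed-point lemma gives that $C := \{\alpha < \omega_1 : g(\alpha) = \alpha\}$ is a club; stationarity of $A$ then yields some $\alpha \in C \cap A$, and $Y := Y_\alpha$ has $X \subseteq Y$, $Y \cap \omega_1 = X \cap \omega_1$, and $\otp(Y \cap \kappa) = \alpha \in A$, as required. I expect the main obstacle to be verifying the freezing observation, specifically the assertion that the constant $c_f$ really lies in $Z$: this is where one genuinely uses that $U$ is a member of $Z$, so that the defining formula for $c_f$ from $f$ and $U$ can be reflected into $Z$ by elementarity.
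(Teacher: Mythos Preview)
Your overall strategy matches the paper's: pick a normal measure $U\in X$, build an $\omega_1$-chain of countable elementary submodels above $X$ while keeping $\omega_1$ fixed, and then use stationarity of $A$ to find a stage whose $\kappa$-ordertype lands in $A$. The gap is in the last step.

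Your freezing observation is correct as stated, but it only controls $Z\cap\omega_1$; it says nothing about how $Y_{\alpha+1}\cap\kappa$ sits relative to $Y_\alpha\cap\kappa$. Since $B_Z$ intersects only the fibers of functions $f\colon\kappa\to\omega_1$, the hull $\mathrm{Hull}(Z\cup\{\beta\})$ can acquire new ordinals below $\sup(Z\cap\kappa)$. Two consequences: first, adding a fresh $\beta_\alpha$ need not increase $\otp(Y_\alpha\cap\kappa)$ (e.g.\ inserting an ordinal into a gap of a set of ordertype $\omega$ leaves the ordertype $\omega$), so the inductive claim $g(\alpha)\geq\alpha$ is not justified at successors. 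Second, and more seriously, $g$ need not be continuous at limits: for an increasing chain $A_0\subseteq A_1\subseteq\cdots$ of sets of ordinals one has only $\otp(\bigcup_n A_n)\geq\sup_n\otp(A_n)$, with strict inequality possible (take $A_n=n\cup\{\omega\}$). Without continuity the fixed-point lemma does not apply, and there is no reason the range of $g$ should meet $A$.

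The paper avoids this by taking $\eta\in\bigcap(U\cap Z)$ (the intersection of \emph{all} measure-one sets in $Z$, not just fibers of $\omega_1$-valued functions) and setting $Z'=\{f(\eta):f\in Z,\ \mathrm{dom}(f)=\kappa\}$. Normality of $U$ then forces $Z'\cap\kappa$ to end-extend $Z\cap\kappa$: any $f(\eta)<\sup(Z\cap\kappa)$ is bounded by some $\delta\in Z\cap\kappa$, and the truncated function $\kappa\to\delta+1$ is $U$-almost constant with value in $Z$. With end-extension, the map $\alpha\mapsto\otp(X_\alpha\cap\kappa)$ is genuinely strictly increasing and continuous, so its range is a club and meets $A$. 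Your argument is repaired by replacing $B_Z$ with $\bigcap(U\cap Z)$, which is exactly the paper's move.
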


\begin{proof}
	Let $U \in X$ be a normal measure on $\kappa$. For any countable $Z \prec H_{\theta}$ with $U \in Z$, if $$\eta \in \bigcap (U \cap Z)$$ and if $Z'$ is the set of values $f(\eta)$ for $f$ a function in $Z$ with domain $\kappa$, then $Z' \prec H_{\theta}$, $Z \subseteq Z'$ and 
	$Z' \cap \kappa$ end-extends $Z \cap \kappa$, i.e., $Z \cap \kappa = Z' \cap {\rm sup}(Z\cap \kappa)$. 
	
	We may then form a continuous chain $\langle X_{\alpha} \colon \alpha <\omega_1\rangle$ of countable elementary substructures of $H_\theta$ such that $X_0 = X$ and $X_{\alpha} \cap \kappa = X_{\beta} \cap {\rm sup}(X_{\alpha}\cap \kappa)$ for all $\alpha \leq \beta < \omega_{1}$. It follows that $\{ \otp(X_{\alpha} \cap \kappa) : \alpha < \omega_{1}\}$ is a club subset of $\omega_{1}$. As $A$ is stationary, there is then some $\alpha <\omega_1$ such that setting $Y = X_{\alpha}$, $Y$ is as desired. 
\end{proof} 


Our first application of Lemma \ref{endexlem} is the following. 

\begin{lemma}\label{semi-proper}
	Let $A$ be a stationary subset of $\omega_{1}$ and let $\kappa$ be a measurable cardinal.  Then $Q(A, \kappa)$ is semi-proper and forces the statement ``$\kappa \in \tilde{A}$". 
\end{lemma}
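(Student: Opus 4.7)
For semi-properness, my plan is to use the standard strategy of finding a suitable larger elementary submodel and building a semi-generic descending sequence inside it; the same density analysis then also yields $\kappa \in \tilde A$. Fix a sufficiently large regular $\theta$, a countable $X \prec H_\theta$ with $Q(A,\kappa), A, \kappa \in X$, and $p \in X \cap Q(A,\kappa)$. Apply Lemma~\ref{endexlem} to obtain a countable $Y \prec H_\theta$ with $X \subseteq Y$, $Y \cap \omega_1 = X \cap \omega_1 =: \delta$, and $\otp(Y \cap \kappa) \in A$. Enumerate the $Q(A,\kappa)$-names in $X$ for countable ordinals as $\langle \dot\alpha_n : n < \omega\rangle$ and fix an enumeration $\langle y_n : n < \omega\rangle$ of $Y \cap \kappa$. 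Build a descending sequence $p = p_0 \geq p_1 \geq \cdots$ of conditions in $Y \cap Q(A,\kappa)$ with strictly increasing last indices $\zeta_n$, such that $p_{n+1}$ decides $\dot\alpha_n$ and $y_n$ belongs to the top entry of $p_{n+1}$. At each step such $p_{n+1}$ exists in $Y$ by elementarity, since the existence statement involves only parameters in $Y$ and the density used to extend conditions needs only that $A$ is unbounded in $\omega_1$. Set $\zeta_\infty := \sup_n \zeta_n$, a countable limit ordinal with $\zeta_\infty \leq \delta$, and extend $\bigcup_n p_n$ to a condition $q$ of last index $\zeta_\infty$ by defining $a^q_{\zeta_\infty} := \bigcup_n a^{p_n}_{\zeta_n}$. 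Since each $p_n \in Y$ and countable members of $Y$ are subsets of $Y$, each $a^{p_n}_{\zeta_n} \subseteq Y \cap \kappa$; meanwhile the bookkeeping puts every $y_n$ in $a^q_{\zeta_\infty}$, so $a^q_{\zeta_\infty} = Y \cap \kappa$. Thus $\otp(a^q_{\zeta_\infty}) \in A$ and $q$ is a legitimate condition. Any name $\dot\alpha \in X$ for a countable ordinal is decided by some $p_{n+1}$ to a value in $Y \cap \omega_1 = \delta$, so $q \forces \dot\alpha < \delta$, giving $X$-semi-genericity.

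For $\kappa \in \tilde A$, the set of conditions whose top entry contains a given $\beta < \kappa$ is easily dense (at any condition of last index $\zeta$, extend by one step, properly enlarging the top entry to include $\beta$ and choosing any order type in $A$ above $\otp(a_\zeta)$), and the semi-genericity construction above makes conditions of any prescribed countable length dense. Hence in $V^{Q(A,\kappa)}$ the generic sequence $\langle a_\alpha : \alpha < \omega_1\rangle$ is continuous, strictly increasing, has $\otp(a_\alpha) \in A$ for every $\alpha$, and satisfies $\bigcup_\alpha a_\alpha = \kappa$. Let $b \colon \omega_1 \to \kappa$ be the increasing enumeration of this union, so $b[\otp(a_\alpha)] = a_\alpha$ for every $\alpha$. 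The map $\alpha \mapsto \otp(a_\alpha)$ is continuous and strictly increasing with range a club $C$, and for $\beta \in C$ we have $\otp(b[\beta]) = \beta \in A$. Thus $\kappa \in \tilde A$.

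The principal obstacle is controlling the order type of the top entry $a^q_{\zeta_\infty}$ in the semi-genericity argument: continuity of the generic chain forces this entry to be $\bigcup_n a^{p_n}_{\zeta_n}$, whose order type a priori could be any countable ordinal. Lemma~\ref{endexlem}, which is exactly where measurability of $\kappa$ is used, solves this by supplying $Y$ in advance with $\otp(Y \cap \kappa) \in A$; the bookkeeping through an enumeration of $Y \cap \kappa$ then pins the limit union to be exactly $Y \cap \kappa$ rather than some uncontrolled subset.
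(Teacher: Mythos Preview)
Your semi-properness argument is correct and is essentially the paper's proof written out in detail: pass from $X$ to $Y$ via Lemma~\ref{endexlem}, build a descending sequence in $Y$ whose union has top entry exactly $Y\cap\kappa$, and observe that this yields an $X$-semi-generic condition since $Y\cap\omega_1=X\cap\omega_1$.

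The argument for $\kappa\in\tilde A$, however, has a genuine gap. You write ``let $b\colon\omega_1\to\kappa$ be the increasing enumeration of this union'': but the union is $\kappa$ itself, an ordinal strictly greater than $\omega_1$, so its increasing enumeration has domain $\kappa$, not $\omega_1$. More seriously, even granting some bijection $b$, your claims that $\alpha\mapsto\otp(a_\alpha)$ is strictly increasing and continuous are both false in general. Proper containment $a_\alpha\subsetneq a_{\alpha+1}$ does not force $\otp(a_\alpha)<\otp(a_{\alpha+1})$ (e.g.\ the even naturals sit properly inside $\omega$ with the same order type), and at limits $\otp(\bigcup_{\alpha<\lambda}a_\alpha)$ can exceed $\sup_{\alpha<\lambda}\otp(a_\alpha)$ (e.g.\ $a_n=\{0,\dots,n\}\cup\{\omega\}$ has $\otp(a_n)=n+2$ but $\otp(\bigcup_n a_n)=\omega+1$). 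Nothing in the forcing prevents such configurations.

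The fix is simple: take \emph{any} bijection $b\colon\omega_1\to\kappa$ (which exists since the forcing collapses $\kappa$ to $\omega_1$) and show that $C=\{\alpha<\omega_1: b[\alpha]=a_\alpha\}$ is a club. Closure is immediate from continuity of the generic chain at limits; unboundedness is a standard back-and-forth argument using that $\bigcup_\alpha a_\alpha=\kappa$ and each $a_\alpha$ is countable. For $\alpha\in C$ one then has $\otp(b[\alpha])=\otp(a_\alpha)\in A$, witnessing $\kappa\in\tilde A$.
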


\begin{proof}
	To see that $Q(A, \kappa)$ is semi-proper, let $\theta > 2^{\kappa}$ be a regular cardinal and let $X \prec H_{\theta}$ be countable with $A, \kappa \in X$. Applying Lemma \ref{endexlem}, let $Y \prec H_{\theta}$ contain $X$, with $X \cap \omega_{1} = Y \cap \omega_{1}$, and $\otp(Y \cap \kappa) \in A$. Then the union of any $(Q(A, \kappa), Y)$-generic filter is a condition in $Q(A, \kappa)$. That $Q(A, \kappa)$ forces $\kappa$ into $\tilde{A}$ follows by a standard genericity argument. 
\end{proof}

Let us say that ${\vec A}= \langle A_{\alpha} \colon \alpha <\omega_1 \rangle$ {\em splits} $\omega_1$ {\em into stationary sets} if
\begin{itemize}
	\item each $A_{\alpha}$ is a stationary subset of $\omega_1$,
	\item $\omega_1 = \bigcup_{\alpha<\omega_1} A_\alpha$, and
	\item $A_\alpha \cap A_{\beta} = \emptyset$ for all $\alpha < \beta < \omega_{1}$. 
\end{itemize}
Given such a $\vec{A}$, and a set $S \subseteq \omega_{1}$, we will write $\vec{A}(S)$ for $\bigcup_{\alpha \in S}A_{\alpha}$. Given an ordinal $\gamma$, we will say that  $S$ is {\em certified at} $\gamma$ ({\em modulo} ${\vec A}$) if $\gamma$ is in the tilde of $\vec{A}(S)$, i.e., if there exists a sequence $\langle a_{\alpha} \colon \alpha < \omega_1 \rangle$ such that
\begin{itemize}
	\item for all $\alpha < \omega_{1}$ $a_{\alpha} \in [\gamma]^\omega$ and $\otp(a_{\alpha}) \in S$,
	\item $a_{\alpha} \subsetneq a_{\beta}$ for all $\alpha < \beta < \omega_{1}$,
	\item $a_{\beta} = \bigcup_{\alpha < \beta} a_{\alpha}$ when $\beta <\omega_1$ is a limit ordinal, and 
	\item $\gamma = \bigcup_{\alpha <\omega_1} a_{\alpha}$. 
\end{itemize}
Note that if $S$ is certified at $\gamma$ then so is every superset of $S$. 

Given an ordinal $\gamma$ in the interval $[\omega_{1}, \omega_{2})$, a \emph{canonical function} for $\gamma$ is a function $f \colon \omega_{1} \to \omega_{1}$ such that, for some bijection $\pi \colon \omega_{1} \to \gamma$, $f(\alpha)$ is the ordertype of $\pi[\alpha]$ for each $\alpha < \omega_{1}$. Fixing such an $f$, and $S$ and $\vec{A}$ as above, we say that  $S$ is {\em coded at} $\gamma$ ({\em modulo} ${\vec A}$) if for all $\alpha<\omega_1$,
$$\alpha \in S \Leftrightarrow \{ \beta <\omega_1 \colon f(\beta) \in A_\alpha \} \mbox{ is stationary.}$$
Since any two canonical functions for $\gamma$ agree on a club subset of $\omega_{1}$, this definition does not depend on the choice of $f$. 

\emph{Canonical Function Bounding} ($\sf{CFB}$) is the statement that for each function $f \colon \omega_{1}\to \omega_{1}$ there is a canonical function $g \colon \omega_{1} \to \omega_{1}$ for some $\gamma \in [\omega_{1},  \omega_{2})$ such that the set $\{ \alpha  <\omega_{1} : f(\alpha) < g(\alpha)\}$ contains a club. It is a standard fact that ${\sf CFB}$ follows from the saturation ${\sf NS}_{\omega_{1}}$ (the proof starts by noting that if ${\NS}_{\omega_{1}}$ is saturated, then forcing with $\cP(\omega_{1})/{\NS_{\omega_{1}}}$ cannot collapse $\omega_{2}$, which means that the identity function on $\omega_{1}$ must represent $\omega_{2}^{V}$ in any induced generic ultrapower).

\begin{lemma}\label{semi-proper2}
	Suppose that $\vec{A}= \langle A_{\alpha} : \alpha < \omega_{1} \rangle$ splits $\omega_{1}$ into stationary sets, and that $S \subseteq \omega_{1}$ is nonempty. Let $\kappa$ be a measurable cardinal.
	Suppose that $\vec{a} = \langle a_\alpha \colon \alpha<\omega_1 \rangle$ is $Q(\vec{A}(S), \kappa)$-generic over $V$. Then, in $V[\vec{a}]$, 
	\begin{enumerate}
		
		\item $S$ is certified at $\kappa$ via $\vec{A}$, and 
		
		\item $S$ is coded at $\kappa$, modulo $\vec{A}$.
	\end{enumerate}
\end{lemma}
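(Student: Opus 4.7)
My plan is to treat the two conclusions separately. Claim (1) is essentially built into the design of $Q(\vec{A}(S),\kappa)$ together with two density arguments, and I would handle it first. Claim (2) is more substantial: I would define $f(\alpha)=\otp(a_\alpha)$ in $V[\vec{a}]$, argue that $f$ agrees on a club with a canonical function for $\kappa$, and then verify the two directions of the coding biconditional. I expect the forward direction of (2), namely that $\alpha\in S$ forces stationarity of $\{\beta:f(\beta)\in A_\alpha\}$, to be the main obstacle, and that step is where Lemma \ref{endexlem} gets used in a more refined way than in Lemma \ref{semi-proper}.

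For (1), the definition of $Q(\vec{A}(S),\kappa)$ already forces $\otp(a_\alpha)\in\vec{A}(S)$, strict monotonicity, and continuity at limits on the generic sequence, so what remains is that $\vec{a}$ has length $\omega_1^{V[\vec{a}]}$ and that $\bigcup_\alpha a_\alpha=\kappa$. Since $S$ is nonempty, $\vec{A}(S)$ is stationary, so $Q(\vec{A}(S),\kappa)$ is semi-proper by Lemma \ref{semi-proper}; hence $\omega_1^V=\omega_1^{V[\vec{a}]}$. Density of ``length $>\zeta$'' for each $\zeta<\omega_1$ follows by fixing any $\gamma\in S$, taking countable $X\prec H_\theta$ containing the given condition, and using Lemma \ref{endexlem} with $A=A_\gamma$ to produce a new top set $Y\cap\kappa$ with $\otp(Y\cap\kappa)\in A_\gamma\subseteq\vec{A}(S)$. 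Throwing any prescribed $\eta<\kappa$ into $X$ before applying the same lemma yields density of ``some $a_\alpha$ contains $\eta$'', giving $\bigcup_\alpha a_\alpha=\kappa$. So $\vec{a}$ directly certifies $S$ at $\kappa$ via $\vec{A}$.

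For (2), working in $V[\vec{a}]$, the sequence $\vec{a}$ witnesses $|\kappa|=\aleph_1$, so I fix a bijection $\pi\colon\omega_1\to\kappa$ and let $g(\alpha)=\otp(\pi[\alpha])$, a canonical function for $\kappa$. To see that $f(\alpha)=\otp(a_\alpha)$ agrees with $g$ on a club, I would verify that $D=\{\alpha<\omega_1:\pi[\alpha]=a_\alpha\}$ is a club by a standard back-and-forth: closure is immediate from joint continuity of $\alpha\mapsto\pi[\alpha]$ and $\alpha\mapsto a_\alpha$ at limits, while unboundedness comes from alternately choosing $\alpha_{n+1}$ large enough that $\pi[\alpha_n]\subseteq a_{\alpha_{n+1}}$ and that $a_{\alpha_n}\subseteq\pi[\alpha_{n+1}]$ and then setting $\alpha=\sup_n\alpha_n$. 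By the club-invariance noted after the definition of coding, it suffices to verify the biconditional for $f$.

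The easy direction is clear: if $\alpha\notin S$, then by disjointness of the family $A_\alpha\cap\vec{A}(S)=\emptyset$, so $f(\beta)=\otp(a_\beta)\in\vec{A}(S)$ never lies in $A_\alpha$, and $\{\beta:f(\beta)\in A_\alpha\}$ is empty. The reverse direction is the main obstacle, and I would argue it by density in $V$. Given a $Q(\vec{A}(S),\kappa)$-name $\dot C$ for a club in $\omega_1$ and a condition $p$, take countable $X\prec H_\theta$ with $p,\dot C,\vec A,S,\alpha,\kappa\in X$, let $\delta=X\cap\omega_1$, and apply Lemma \ref{endexlem} with the specific stationary set $A_\alpha$ to obtain $Y\supseteq X$ with $Y\cap\omega_1=\delta$ and $\otp(Y\cap\kappa)\in A_\alpha$. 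Exactly as in the proof of Lemma \ref{semi-proper}, building a $(Q(\vec{A}(S),\kappa),Y)$-generic filter extending $p$ and taking its union yields a condition $p^*\leq p$ of length $\delta$ with top element $a_\delta^{p^*}=Y\cap\kappa$, and hence with $f(\delta)=\otp(a_\delta^{p^*})\in A_\alpha$. The standard semi-genericity calculation, applied to the names in $Y$ for the least element of $\dot C$ above each $\xi\in Y\cap\omega_1$, forces every such element below $\delta$, so closure of $\dot C$ gives $p^*\Vdash\delta\in\dot C$. Combining, $p^*\Vdash\delta\in\dot C\wedge f(\delta)\in A_\alpha$, showing that $\{\beta:f(\beta)\in A_\alpha\}$ meets every club in $V[\vec{a}]$ and is therefore stationary. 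The crux is thus the combination of Lemma \ref{endexlem} (to control the order type of the top element of $p^*$) with semi-genericity (to steer $\delta$ into $\dot C$).
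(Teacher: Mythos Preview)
Your proof is correct and follows essentially the same route as the paper: part (1) via the genericity/density arguments behind Lemma \ref{semi-proper}, and part (2) via $f(\beta)=\otp(a_\beta)$ as (a club-equivalent of) a canonical function, the trivial direction for $\alpha\notin S$, and for $\alpha\in S$ the construction of a master condition from a $(Y,Q)$-generic filter where $Y$ is obtained from Lemma \ref{endexlem} applied with the specific stationary set $A_\alpha$. The only difference is exposition: you unpack several steps (preservation of $\omega_1$, the density arguments giving $\bigcup_\alpha a_\alpha=\kappa$, the club on which $f$ agrees with a canonical function, and why $p^*\Vdash\delta\in\dot C$) that the paper leaves implicit or cites as standard.
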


\begin{proof}
The first part of the lemma follows from the second part of Lemma \ref{semi-proper}. The second follows similarly, by a genericity argument, as follows. The function $f \colon \omega_{1} \to \omega_{1}$ defined by letting $f(\beta)$ be $\otp(a_{\beta})$ is a canonical function for $\kappa$. 
If $\alpha \in \omega_{1} \setminus S$, then $\{ \beta <\omega_1 \colon {\rm otp}(a_{\beta}) \in A_\alpha \} = \emptyset$. We thus only need to see that if $\alpha \in S$, then $\{ \beta <\omega_1 \colon {\rm otp}(a_{\beta}) \in A_\alpha \}$ is stationary in $V[g]$.

Let $p \in {\mathbb P}$ and ${\dot C} \in V^{\mathbb P}$ be such that $p \Vdash {\dot C}$ is a club subset of $\omega_1$. Let $\theta > 2^{\kappa}$ be a regular cardinal and let $X$ be a countable elementary submodel of $H(\theta)$ with $p$ and $\dot{C}$ as members. Applying Lemma \ref{endexlem}, let $Y \prec H_{\theta}$ contain $X$, with $X \cap \omega_{1} = Y \cap \omega_{1}$, and $\otp(Y \cap \kappa) \in A_{\alpha}$. Then the union of any $(Q(\vec{A}(S), \kappa), Y)$-generic filter is a condition in $Q(\vec{A}(S), \kappa)$ forcing that $X \cap \omega_{1} \in \dot{C}$ and $a_{X \cap \omega_{1}} = Y \cap \kappa$, so $\otp(a_{X \cap \omega_{1}}) \in A_{\alpha}$.
\end{proof}

Note that being certified via $\vec{A}$ at a given ordinal is $\Sigma_1$ in $\omega_{1}$, so absolute to outer models. The property of being coded modulo $\vec{A}$ is $\Sigma_1$ in $\NS_{\omega_{1}}$ and $\vec{A}$, and therefore absolute to models preserving stationary subsets of $\omega_{1}$.

In what follows, we write ${\rm Col}(\omega_1,\omega_1)$ for the partial order consisting of all functions $p \colon \zeta \rightarrow \omega_1$, for some countable ordinal $\zeta$, ordered by end-extension. If $g$ is ${\rm Col}(\omega_1,\omega_1)$-generic over $V$, then we confuse $g$ with $\bigcup g$, a function from $\omega_1$ to $\omega_1$, and also write it as $g$. 

Given a function $g \colon \omega_{1} \to \omega_{1}$, we let the \emph{partition of} $\omega_{1}$ \emph{induced by} $g$ be the sequence $\langle A_{\alpha} : \alpha < \omega\rangle$ such that each $A_{\alpha}$ is the set $\{ \beta < \omega_{1} : g(\beta)= \alpha\}$. 
The proof of the following standard fact is elementary. 

\begin{lemma}\label{folklore}
If $g \colon \omega_1 \to \omega_1$ is ${\rm Col}(\omega_1,\omega_1)$-generic over $V$ then 
in $V[g]$ the partition of $\omega_{1}$ induced by $g$ 
splits $\omega_1$ into stationary sets. 
\end{lemma}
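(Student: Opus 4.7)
The plan is to verify the three bullets in the definition of splitting $\omega_1$ into stationary sets. Two of them are essentially automatic: by a density argument (using that conditions can always be extended to any countable length), $g$ is a total function from $\omega_1$ to $\omega_1$ in $V[g]$, so the sets $A_\alpha = \{\beta < \omega_1 : g(\beta) = \alpha\}$ cover $\omega_1$ and are pairwise disjoint. All the work goes into stationarity of each $A_\alpha$, which is a standard genericity argument exploiting the $\sigma$-closure of ${\rm Col}(\omega_1,\omega_1)$.

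To set up the stationarity argument, I fix $\alpha < \omega_1$, a condition $p \in {\rm Col}(\omega_1,\omega_1)$, and a name $\dot{C}$ forced by $p$ to be a club subset of $\omega_1$; the goal is to find $q \leq p$ and $\beta < \omega_1$ such that $q \Vdash \beta \in A_\alpha \cap \dot{C}$. Pick a regular $\theta$ large enough and a countable $X \prec H_\theta$ with $p, \dot{C}, \alpha, {\rm Col}(\omega_1,\omega_1) \in X$, and set $\delta = X \cap \omega_1$. Enumerate the dense open subsets of ${\rm Col}(\omega_1,\omega_1)$ lying in $X$ as $\langle D_n : n < \omega\rangle$, and build inside $X$ a descending sequence $p = p_0 \geq p_1 \geq \cdots$ with $p_{n+1} \in D_n$, arranging along the way (by density) both that $\sup_n \mathrm{dom}(p_n) = \delta$ and that cofinally many $\gamma < \delta$ are forced by some $p_n$ to lie in $\dot{C}$.

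By countable closure of ${\rm Col}(\omega_1,\omega_1)$ in $V$, the sequence $\langle p_n : n < \omega\rangle$ has a common extension $q_0$; since each $p_n$ has countable domain and we arranged $\sup_n \mathrm{dom}(p_n) = \delta$, we may take $\mathrm{dom}(q_0) = \delta$. The cofinal choice of $\gamma$'s above forces $\delta$ to be a limit point of $\dot{C}$, hence $q_0 \Vdash \delta \in \dot{C}$. Finally, let $q$ be the extension of $q_0$ with $\mathrm{dom}(q) = \delta + 1$ and $q(\delta) = \alpha$. Then $q \Vdash g(\delta) = \alpha$, i.e.\ $q \Vdash \delta \in A_\alpha$, and $q \Vdash \delta \in \dot{C}$, which is what was required.

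There is not really a main obstacle here; the only point worth being careful about is that the common extension $q_0$ in fact has domain exactly $\delta$ (so that the new value at $\delta$ can be freely set to $\alpha$) and that $\delta$ is forced into $\dot{C}$, both of which are arranged by bookkeeping the appropriate dense sets into the enumeration $\langle D_n \rangle$ before appealing to $\sigma$-closure.
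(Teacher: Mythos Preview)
Your argument is correct and is exactly the standard $\sigma$-closure/elementary-submodel proof one expects for this fact. The paper itself does not give a proof at all, merely declaring the result ``standard'' and ``elementary,'' so there is nothing further to compare.
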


In the rest of the paper we will write $P(S, \gamma, g)$ for the partial order $Q(\vec{A}(S), \gamma)$, where $S$ is a subset of $\omega_{1}$, $\gamma \geq \omega_{1}$ is an ordinal, and $\vec{A}$ is the partition of $\omega_{1}$ induced by $g$. 



\section{The forcing iteration}

We are now ready to define our forcing iteration. In order to facilitate the discussion, let us introduce an ad hoc term for the kind of iterations which we will be interested in.

\begin{defn}\label{appropriate}
Let $\langle {\mathbb P}_\eta , {\dot {\mathbb Q}}_\xi \colon \eta \leq \delta , \xi < \delta\rangle$ be a countable support iteration of forcings. We call this iteration {\em appropriate} if ${\mathbb P}_0 = {\rm Col}(\omega_1,\omega_1)$ and there exists a sequence \[\langle \dot{S}_{\xi}, \alpha_{\xi}, \kappa_{\xi} : \xi < \delta \rangle\] such that, for all $\xi > 0$, either 
\begin{enumerate}
\item $\dot{\mathbb Q}_\xi$ is a $\bbP_{\xi}$-name for a proper forcing
or
\item $\dot{S}_{\xi}$ is a $\bbP_{\xi}$-name for a stationary subset of $\omega_{1}$ with $\alpha_{\xi}$ as a member, $\kappa_{\xi}$ is a measurable cardinal greater than $|\bbP_{\xi}|$ and, letting $\dot{g}_{0}$ be a $\bbP_{0}$-name for the generic function from $\omega_{1}$ to $\omega_{1}$ added by $\bbP_{0}$, 
$\Vdash_{{\mathbb P}_\xi} {\dot {\mathbb Q}}_\xi = P(\dot{S}_{\xi},\check{\kappa}_{\xi}, \dot{g}_0).$
\end{enumerate}
\end{defn}


In what follows we let $\dot{g}_{\xi}$ (for some ordinal $\xi$, relative to an appropriate iteration) denote the canonical name for the generic filter for $\bbP_{\xi}$, and when talking of a particular generic filter $g \subseteq \bbP_{\delta}$, let $g_{\xi}$ denote the restriction of $g$ to $\bbP_{\xi}$. 

\begin{lemma}\label{lemma}
Let $\bar{\bbP}= \langle {\mathbb P}_\eta , {\dot {\mathbb Q}}_\xi \colon \eta \leq \delta , \xi < \delta\rangle$ be an appropriate iteration, as witnessed by $W = \langle \dot{S}_{\xi}, \alpha_{\xi}, \kappa_{\xi} : \xi < \beta \rangle$. 
Let $\rho < \delta$ be nonzero, let $\theta > 2^{|\bbP|}$ be a regular cardinal and let $X$ be a countable elementary substructure of $H_\theta$ with $\bar{\bbP}$, $W$ and $\rho$ in $X$.
Suppose that $q$, ${\dot p}$ are such that
\begin{enumerate}
\item $q \in {\mathbb P}_\rho$ is $(X,{\mathbb P}_\rho)$-generic, 
\item for each $\xi \in X \cap \delta$, $q(0)(\otp(X \cap \kappa_{\xi})) = \alpha_{\xi}$, 
\item  ${\dot p} \in V^{{\mathbb P}_\rho}$, and 
\item  $q \Vdash_{{\mathbb P}_\rho} {\dot p} \in {\mathbb P}_\delta \cap X \wedge 
{\dot p} \upharpoonright \rho \in {\dot G}_\rho$. 
\end{enumerate}
Then there is a condition $r \in \bbP_{\delta}$ such that
\begin{itemize}
\item $r$ is $(X,{\mathbb P}_\delta)$-generic,
\item $r \upharpoonright \rho = q$, and
\item $r \Vdash_{{\mathbb P}_\delta} {\dot p} \in {\dot G}_\delta$.
\end{itemize}
\end{lemma}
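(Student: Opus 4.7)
The strategy is to prove the lemma by induction on $\delta$, building $r$ via a Shelah-style fusion that threads $q$ through the iteration while meeting all dense open subsets of $\bbP_\delta$ lying in $X$. Condition (2) will be preserved automatically throughout, since I never modify the $0$-th coordinate $q(0)$; this is essential for the inductive step to apply at higher stages.

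For the successor case $\delta = \gamma + 1$, first invoke the inductive hypothesis to obtain a condition $r' \in \bbP_\gamma$ extending $q$, $(X, \bbP_\gamma)$-generic, and forcing $\dot{p} \restrict \gamma \in \dot{G}_\gamma$ (taking $r' = q$ when $\gamma = \rho$). Now work in a $\bbP_\gamma$-extension below $r'$, noting that $X[\dot{G}_\gamma] \cap \omega_1 = X \cap \omega_1$ by genericity. If $\gamma \notin X$, set $r(\gamma)$ to be trivial. If $\gamma \in X$ and $\dot{\bbQ}_\gamma$ is proper, use properness to extend $\dot{p}(\gamma)$ to a condition $r(\gamma)$ that is $(X[\dot{G}_\gamma], \dot{\bbQ}_\gamma)$-generic. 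If $\gamma \in X$ and $\dot{\bbQ}_\gamma = P(\dot{S}_\gamma, \kappa_\gamma, \dot{g}_0)$, take an $(X[\dot{G}_\gamma], \dot{\bbQ}_\gamma)$-generic filter below $\dot{p}(\gamma)$, let $\langle a_\alpha : \alpha < X \cap \omega_1\rangle$ be its union (so $\bigcup_\alpha a_\alpha = X \cap \kappa_\gamma$ by genericity), and define $r(\gamma)$ by appending $a_{X \cap \omega_1} := X \cap \kappa_\gamma$. The nontrivial task is verifying this is a legal condition; this is addressed in the final paragraph.

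For the limit case, reduce to $\sup(X \cap \delta) = \delta$ by padding trivially above the supremum when $X \cap \delta$ is bounded, using that any element of $X \cap \bbP_\delta$ has support contained in $X \cap \delta$. Fix a cofinal sequence $\langle \xi_n : n < \omega\rangle$ in $X \cap \delta$ with $\xi_0 = \rho$, and enumerate the dense open subsets of $\bbP_\delta$ that lie in $X$ as $\langle D_n : n < \omega\rangle$. Build sequences $\langle r_n : n < \omega\rangle$ and $\langle \dot{p}_n : n < \omega\rangle$ with $r_0 = q$, $\dot{p}_0 = \dot{p}$, each $r_n \in \bbP_{\xi_n}$ being $(X, \bbP_{\xi_n})$-generic and extending $r_{n-1}$, and each $\dot{p}_n$ a $\bbP_{\xi_n}$-name for an element of $X \cap \bbP_\delta$ below $\dot{p}_{n-1}$ meeting $D_{n-1}$, forced by $r_n$ to satisfy $\dot{p}_n \restrict \xi_n \in \dot{G}_{\xi_n}$. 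At each step, choose $\dot{p}_{n+1}$ below $r_n$ by a density argument, then apply the inductive hypothesis to $\bbP_{\xi_{n+1}}$ (with $r_n$ replacing $q$, $\xi_n$ replacing $\rho$, and $\dot{p}_{n+1}$ replacing $\dot{p}$) to produce $r_{n+1}$. The limit $r := \bigcup_{n < \omega} r_n$ lies in $\bbP_\delta$, since its support is contained in $X \cap \delta$ and hence countable; it restricts to $q$ below $\rho$, meets every dense open set in $X$, and forces $\dot{p} \in \dot{G}_\delta$.

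The main obstacle is legality of the appended condition in the coding step: one needs $\otp(X \cap \kappa_\gamma)$ to lie in $\vec{A}(\dot{S}_\gamma)^{\dot{G}_\gamma}$, i.e., in $\bigcup_{\beta \in \dot{S}_\gamma} A_\beta$ for the partition $\vec{A}$ induced by $\dot{g}_0$. This is exactly where condition (2) earns its keep: since $q(0)(\otp(X \cap \kappa_\gamma)) = \alpha_\gamma$, in every extension $\otp(X \cap \kappa_\gamma) \in A_{\alpha_\gamma}$, and by the definition of an appropriate iteration $\alpha_\gamma \in \dot{S}_\gamma$, so $A_{\alpha_\gamma} \subseteq \vec{A}(\dot{S}_\gamma)$. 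In effect, the pre-alignment encoded by $q(0)$ forces $X$ itself to play the role that Lemma \ref{endexlem} would otherwise manufacture via an end-extension $Y \supseteq X$; this upgrades what is only semi-properness for a single coding forcing to genuine preservation of $(X, \bbP_\delta)$-genericity, which is precisely what a countable-support fusion requires in order to close up at limits.
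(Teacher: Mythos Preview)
Your argument is correct and follows the same route as the paper: induction on $\delta$, with the limit case handled by the standard countable-support fusion (which the paper simply cites as \cite[Lemma 31.17]{jech}) and the successor case split into the proper iterand and the coding iterand, the latter resolved by appending $a_{X\cap\omega_1}=X\cap\kappa_\gamma$ and invoking condition (2) to see that $\otp(X\cap\kappa_\gamma)\in A_{\alpha_\gamma}\subseteq\vec A(\dot S_\gamma)$. Your final paragraph makes explicit exactly the point the paper leaves implicit, namely that the pre-alignment in $q(0)$ replaces the end-extension step of Lemma~\ref{endexlem} and is what upgrades semi-properness of the coding forcing to genuine $(X,\bbP_\delta)$-genericity; the paper just records the relevant facts in a bullet list and says ``in much the same way as in the proof of Lemma~\ref{semi-proper}.'' Two cosmetic remarks: the case $\gamma\notin X$ in your successor step is vacuous since $\delta\in X$ forces $\gamma\in X$, and in the fusion you mean to feed $\dot p_{n+1}\restrict\xi_{n+1}$ (rather than $\dot p_{n+1}$) into the inductive hypothesis for $\bbP_{\xi_{n+1}}$.
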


\begin{proof}
	The proof is by induction on $\delta$.
If $\delta$ is a limit ordinal, then this is by the usual proper forcing argument, see e.g.\ the proof of \cite[Lemma 31.17]{jech}. If $\delta$ is a successor, then we may assume
that $\delta=\rho+1$. If $\Vdash_{{\mathbb P}_\rho} {\dot {\mathbb Q}}_\rho$ is proper, then this is again by the usual proper forcing argument, see e.g.\ \cite[Lemma 31.18]{jech}. Let us thus assume that
$\dot{S}_{\rho}$, $\alpha_{\rho}$ and $\kappa_{\rho}$ are as in the second case of Definition \ref{appropriate}. 

Suppose now that $g_{\rho}$ is ${\mathbb P}_{\rho}$-generic over $V$ with $q \in g_{\rho}$. We have that 
\begin{itemize}
	\item ${\dot {\mathbb Q}}_{\rho, g_{\rho}}= P(\dot{S}_{\rho, g_{\rho}}, \kappa_{\alpha}, g_0)$; 
	\item $\dot{S}_{\rho, g_{\rho}}$ is in $X[g_{\rho}]$ and stationary in $V[g_{\rho}]$; 
	\item $\kappa_{\rho}$ is in $X$ and measurable in $V[g_{\rho}]$;  
	\item $g_0({\rm otp}(X \cap \kappa_{\rho})) = \alpha_{\rho}$ and $\alpha_{\rho} \in \dot{S}_{\rho, g_{\rho}}$; 
	\item ${\dot p}_g \in X[g_{\rho}]$.
\end{itemize}
We may then produce in a standard fashion, in much the same way as in the proof of Lemma \ref{semi-proper}, some $s \in P(\dot{S}_{\rho, g_{\rho}}, \kappa_{\rho}, g_0)$ such that 
\begin{itemize}
\item $s <_{P(\dot{S}_{\rho, g_{\rho}}, \kappa_{\rho}, g_0)} {\dot p}_{g_{\rho}}(\rho)$,
\item ${\rm dom}(s) = (X \cap \omega_1) +1= (X[g] \cap \omega_1) +1$,
\item for each $D \in X[g_{\rho}]$ which is dense in $P(\dot{S}_{\rho, g_{\rho}}, \kappa_{\rho}, g_0)$ there is some \[{\bar s} >_{P(\dot{S}_{\rho, g_{\rho}}, \kappa_{\rho}, g_0)} s\] with ${\bar s} \in D \cap X[g]$, and 
\item $s(X \cap \omega_1)=X \cap \kappa_{\rho}$.
\end{itemize}
In particular, $s$ is $(X[g_{\rho}],P(\dot{S}_{\rho, g_{\rho}}, \kappa_{\rho}, g_0))$-generic.

By fullness, there is then some ${\mathbb P}_\rho$-name ${\dot s}$  such that $q$ forces that ${\dot s} \in {\dot {\mathbb Q}}_{\rho}$ is $(X[{\dot g}_{\rho}],{\dot {\mathbb Q}}_{\rho})$-generic and ${\dot s} <_{\bbQ_{\rho}} {\dot p}(\rho)$. It follows that $r = q^{\frown} {\dot s}$ is as desired.
\end{proof}

Lemma \ref{lemma} (plus the ability to choose $(X, \bbP_{0})$-generic $q$ satisfying the hypothesis of the lemma in case $\rho = 0$ with respect to any given $\dot{p}$) gives the following. 

\begin{thm}\label{mainthrmdetail0}
	If Let $\langle {\mathbb P}_\eta , {\dot {\mathbb Q}}_\xi \colon \eta \leq \delta ,\, \xi < \delta\rangle$ is an appropriate iteration then for every $\eta \leq \delta$,  ${\mathbb P}_\eta$ is proper.
\end{thm}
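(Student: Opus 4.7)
The plan is to induct on $\eta$, using Lemma \ref{lemma} as the engine. For $\eta = 0$ there is nothing to do, since $\mathbb{P}_0 = {\rm Col}(\omega_1,\omega_1)$ is $\sigma$-closed and hence proper. For the inductive step fix $\eta > 0$, a regular $\theta > 2^{|\mathbb{P}_\eta|}$, a countable $X \prec H_\theta$ containing $\bar{\mathbb{P}}$, the witnessing sequence $W$, and $\eta$, and a condition $p \in X \cap \mathbb{P}_\eta$; we must produce an $(X, \mathbb{P}_\eta)$-generic $r \leq p$. The parenthetical preceding the theorem tells us how: exhibit an $(X,\mathbb{P}_0)$-generic $q \leq p(0)$ additionally satisfying $q(\otp(X\cap\kappa_\xi)) = \alpha_\xi$ for every $\xi \in X \cap \eta$, and then invoke Lemma \ref{lemma} (mirroring its proof in the excluded case $\rho = 0$) to push $q$ out to the desired $r$.

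The nontrivial part is the construction of $q$. Enumerate the dense open subsets of $\mathbb{P}_0$ lying in $X$ as $\langle D_n : n < \omega\rangle$, and recursively pick a $\subseteq$-increasing chain $p(0) = q_0 \subseteq q_1 \subseteq \ldots$ with each $q_n \in D_n \cap X$; set $\bar q = \bigcup_n q_n$, which has domain $X \cap \omega_1$. Now examine $E = \{\otp(X\cap\kappa_\xi) : \xi \in X \cap \eta\}$. Three observations: each element of $E$ is $\geq X \cap \omega_1$, because $\omega_1 \in X$ forces $X \cap \omega_1 \subseteq X \cap \kappa_\xi$; $E$ is countable and hence bounded by some $\zeta < \omega_1$; and $\xi \mapsto \otp(X\cap\kappa_\xi)$ is injective on $X \cap \eta$, since for $\kappa_\xi < \kappa_{\xi'}$ both in $X$ the element $\kappa_\xi$ itself lies in $(X \cap \kappa_{\xi'}) \setminus (X \cap \kappa_\xi)$, forcing strict inequality of ordertypes. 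Thus we may unambiguously extend $\bar q$ to a condition $q : \zeta + 1 \to \omega_1$ by setting $q(\otp(X\cap\kappa_\xi)) = \alpha_\xi$ on $E$ and $q(\beta) = 0$ on the remaining $\beta \in [X \cap \omega_1, \zeta]$.

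By construction $q \in \mathbb{P}_0$ extends $p(0)$, is $(X,\mathbb{P}_0)$-generic (any dense open $D \in X$ is some $D_n$, and $q \supseteq q_n \in D \cap X$), and meets all the side conditions demanded by Lemma \ref{lemma}. Feeding $q$ and (a name for) $p$ into the argument of Lemma \ref{lemma}---which inducts on the length of the iteration and whose statement only excludes $\rho=0$ because a base step of exactly this form is needed first---yields an $(X,\mathbb{P}_\eta)$-generic $r \leq p$ with $r \upharpoonright 0 = q$, completing the inductive step. The one delicate point, and where the construction pays its dues, is arranging the countably many side conditions $q(\otp(X \cap \kappa_\xi)) = \alpha_\xi$ compatibly with $(X,\mathbb{P}_0)$-genericity; this is exactly what the three observations about $E$ permit, and it is why the initial collapse $\mathbb{P}_0$ has the requisite room, above $X \cap \omega_1$, to absorb the codes $\alpha_\xi$ chosen by the bookkeeping in Definition \ref{appropriate}.
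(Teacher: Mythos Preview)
Your proposal is correct and follows exactly the approach the paper indicates. The paper's entire argument for this theorem is the single sentence preceding it: Lemma~\ref{lemma} plus ``the ability to choose $(X,\mathbb{P}_0)$-generic $q$ satisfying the hypothesis of the lemma in case $\rho=0$ with respect to any given $\dot p$.'' You have simply spelled out that parenthetical construction of $q$ in full---building the $(X,\mathbb{P}_0)$-generic part below $X\cap\omega_1$ and then placing the countably many codes $\alpha_\xi$ at the ordinals $\otp(X\cap\kappa_\xi)$ above it---which the paper leaves to the reader (though it carries out an almost identical construction later, in the proof of Theorem~\ref{mainthrmdetail}).
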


\begin{remark}\label{sprem}\normalfont
	
	Since appropriate iterations are proper, they preserve the property of having uncountable cofinality. It follows that the tails of appropriate iterations also preserve uncountable cofinality. Each iterand in an appropriate iteration is semi proper (this follows from Lemma \ref{semi-proper} in the second case). A Revised Countable Support  (RCS) iteration of semi-proper forcings is semi-proper (\cite{Fuchs, Miyamoto, Shelah}). However, an RCS iteration of partial orders is equivalent to the corresponding countable support iteration if it preserves uncountable cofinalities. It follows then that the tails of appropriate iterations are semi-proper, and in particular preserve stationary subsets of $\omega_{1}$. 

\end{remark}

To complete the proof of the main theorem, suppose that $\delta$ is a supercompact cardinal. By a {\em Laver function} for $\delta$ we mean some $R \colon \delta \rightarrow V_\delta$ such that for all $X \in V$ there are ${\bar \delta} < {\bar \theta} < \delta$ and $\theta > \delta$ together with an elementary embedding $$j \colon H_{\bar \theta} \rightarrow H_\theta$$ such that
\begin{itemize}
\item ${\rm crit}(j) = {\bar \delta}$,
\item $j({\bar \delta}) = \delta$,
\item $X \in H_\theta$, and 
\item $j(R({\bar \delta})) = X$.
\end{itemize}

As every supercompact cardinal has a Laver function \cite{Laver}, we fix a Laver function $R$ for $\delta$. We also fix a function $\pi \colon \delta \to \delta \times \omega_{1}$, with component functions $\pi_{0}$ and $\pi_{1}$, such that 
\begin{itemize}
	\item $\pi_{1}(\xi) \leq \xi$ for all $\xi < \delta$, and 
\item for each pair $(\eta, \alpha) \in \delta \times\omega_{1}$, $\pi^{-1}[\{ (\eta, \alpha)\}]$ contains $\delta$ many successor ordinals. 
\end{itemize}

We then define an appropriate iteration 
\[\langle {\mathbb P}_\eta , {\dot {\mathbb Q}}_\xi \colon \eta \leq \delta , \xi < \delta\rangle\] of length $\delta+1$ having properties (1)-(3) below. In doing so we fix for each $\eta < \delta$ a wellordering $\leq_{\eta}$ of the set $N_{\eta}$ consisting of the nice $\bbP_{\eta}$-names for stationary subsets of $\omega_{1}$ (nice in the sense of \cite{Kunen}; there will be less than $\delta$-many such names, and each stationary subset of $\omega_{1}$ in the $\bbP_{\eta}$-extension will be the corresponding realization of one of them).
Given $(\eta, \alpha) \in \delta \times \omega_{1}$, let $N_{\eta, \alpha}$ be the set of $\dot{S} \in N_{\eta}$ such that $\forces_{\bbP_{\eta}} \check{\alpha} \in \dot{S}$. Note then that if $g_{\eta}$ is $V$-generic for $\bbP_{\eta}$, $S \in V[g_{\eta}]$ is a stationary subset of $\omega_{1}$ and $\alpha \in S$, then $S$ is the realization via $g_{\eta}$ of some element of $N_{\eta, \alpha}$. 

Now we fix the following iteration. 
\begin{enumerate}
\item We let ${\mathbb P}_0$ be ${\rm Col}(\omega_1,\omega_1)$. 
\item If $\xi \in (0, \delta)$ is a limit ordinal, then ${\dot {\mathbb Q}}_\xi = R(\xi)$, provided that $\Vdash_{{\mathbb P}_\xi} ``R(\xi)$ is a proper forcing";
${\dot {\mathbb Q}}_\xi$ is trivial otherwise.  
\item If $\xi < \delta$ is a successor ordinal, and $\kappa$ is the least measurable cardinal strictly above $|{\mathbb P}_\xi|$, then ${\dot {\mathbb Q}}_\xi = P(\dot{S}, \check{\kappa}, \dot{g}_{0})$
and $\dot{S}$ is the $\leq_{\pi_{0}(\xi)}$-least $\bbP_{\pi_{0}(\xi)}$-name such that
\begin{itemize}
	\item $\forces_{\bbP_{\pi_{0}(\xi)}} \pi_{1}(\xi) \in \dot{S}$ and 
	\item 
	the realization of $\dot{S}$ by $g_{\pi_{0}(\xi)}$ is stationary and not certified 
	at any member of $\omega_{2}^{V[g_{\xi}]}$ via the partition of $\omega_{1}$ induced by the realization of $\dot{g}_{0}$, 
\end{itemize}
if such a $\dot{S}$ exists; otherwise we let $\dot{S}$ be $\check{\omega}_{1}$. 
\end{enumerate}
Note then that this iteration is appropriate, with each $\kappa_{\xi}$ being the least measurable cardinal above $|\bbP_{\xi}|$, each $\alpha_{\xi}$ being $\pi_{1}(\xi)$ and each $\dot{S}_{\xi}$ being a name built from the two possibilities above for $\dot{S}$ for successor $\xi$ (and $\check{\omega}_{1}$ when $\xi$ is $0$ or a limit ordinal).

Theorem \ref{mainthrm} then follows from the following theorem. 

\begin{thm}\label{mainthrmdetail}
Let $\langle {\mathbb P}_\eta , {\dot {\mathbb Q}}_\xi \colon \eta \leq \delta ,\, \xi < \delta\rangle$ be the iteration defined above, 
let $g$ be ${\mathbb P}_\delta$-generic over $V$, and let  $\vec{A}$ be the partition of $\omega_{1}$ induced by $g(0)$. Then the following hold in $V[g]$.
\begin{enumerate}

\item  For all $S \subseteq \omega_1$ and $\gamma < \delta$, $S$ is certified at $\gamma$ via $\vec{A}$ if and only if there is a successor $\xi < \delta$ such that $\dot{S}_{\xi, g_{\xi}} \subseteq S$ and $\gamma = \kappa_{\xi}$. 

\item For all $S \subseteq \omega_{1}$, $S$ is stationary if and only if there is a $\gamma < \delta$ such that  $S$ is certified at $\gamma$ via $\vec{A}$. 

\item ${\sf PFA} +  \neg \CFB$ $+$ ``${\sf NS}_{\omega_1}$ is $\Pi_1$-definable in a parameter from $H_{\aleph_{2}}$".
\end{enumerate}
\end{thm}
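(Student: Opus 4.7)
The plan handles (1) and (2) together---with (2) falling out of (1) and a bookkeeping argument---and then extracts (3) from them plus a standard Laver-function argument for ${\sf PFA}$.

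For ($\Leftarrow$) of (1), I fix a successor $\xi$ with $\dot{S}_{\xi, g_\xi} \subseteq S$ and $\gamma = \kappa_\xi$. The $\xi$-th coordinate of $g$ is a generic filter for $\dot{\mathbb Q}_\xi = Q(\vec{A}(\dot{S}_{\xi, g_\xi}), \kappa_\xi)$, whose union, by Lemma \ref{semi-proper2}(1), is a certifying sequence for $\dot{S}_{\xi, g_\xi}$ at $\kappa_\xi$; since $\vec{A}(\dot{S}_{\xi, g_\xi}) \subseteq \vec{A}(S)$, the same sequence certifies $S$ at $\gamma$, and this certification, being $\Sigma_1$ in $\omega_1$ and $\vec{A}$, persists to $V[g]$. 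For ($\Leftarrow$) of (2), $\dot{S}_{\xi, g_\xi}$ is stationary (either chosen so by the iteration or defaulting to $\check{\omega}_1$), so $S \supseteq \dot{S}_{\xi, g_\xi}$ is stationary.

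For ($\Rightarrow$) of (2), and simultaneously for producing the witnessing $\xi$ in ($\Rightarrow$) of (1), suppose $S$ is stationary in $V[g]$. By properness and countable support, $S \in V[g_\eta]$ for some $\eta < \delta$, realized by a nice $\bbP_\eta$-name; fix $\alpha \in S$. Since $\pi$ hits $(\eta, \alpha)$ at cofinally many successor stages $\xi$ and each chosen $\dot S_\xi$ thereby becomes certified at $\kappa_\xi$---eliminating its realization from the pool of uncertified candidates in $N_{\eta, \alpha}$---the well-order $\leq_\eta$ forces, after boundedly many eliminations below any fixed name for $S$, that at some such $\xi$ the chosen $\dot{S}_\xi$ has realization $\subseteq S$ (or else $S$ was already certified at some earlier $\kappa_{\xi^*}$). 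By ($\Leftarrow$) of (1), $S$ is then certified at $\kappa_\xi$, giving ($\Rightarrow$) of (2). For the full ($\Rightarrow$) direction of (1), one further pins $\gamma = \kappa_\xi$: the certifying sequence enters some $V[g_\eta]$, and since the only iterands producing continuous increasing $\omega_1$-chains of countable subsets of $\gamma$ with ordertypes constrained to $\vec{A}(S)$ are the successor iterands $P(\dot S_\xi, \kappa_\xi, \dot g_0)$---the proper limit iterands $R(\xi)$ adding no fresh certifications at new ordinals---the introducing stage must be a successor $\xi$ with $\gamma = \kappa_\xi$, and comparing the two certifying sequences at $\kappa_\xi$ forces $\dot{S}_{\xi, g_\xi} \subseteq S$.

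For (3), ${\sf PFA}$ in $V[g]$ follows from the standard Laver-function argument: any proper $\bbQ \in V[g]$ and $\aleph_1$-many dense subsets reflect, via the supercompactness of $\delta$ and the Laver function $R$, to appear as $\dot{\mathbb Q}_\xi = R(\xi)$ at some limit $\xi$, and the iteration supplies a generic filter meeting them. The $\Pi_1$-definability of $\NS_{\omega_1}$ in the parameter $\vec{A} \in H_{\aleph_2}$ is immediate from (2): $S \in \NS_{\omega_1}$ iff there is no $\gamma < \omega_2^{V[g]}$ with a certifying sequence for $S$ at $\gamma$ via $\vec{A}$, a $\Pi_1(\omega_1, \vec{A})$ condition over $H_{\aleph_2}$ with a bounded $\gamma$-quantifier. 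For $\neg\CFB$, I would construct a function $f \colon \omega_1 \to \omega_1$ not dominated modulo club by any canonical function: by Lemma \ref{semi-proper2}(2) each canonical function at a $\kappa_\xi$ encodes $\dot{S}_{\xi, g_\xi}$, and a diagonalization against these codings---reduced via (1) to canonical functions at the $\kappa_\xi$'s---produces such an $f$. The hardest step will be the ($\Rightarrow$) direction of (1), specifically the claim that $\gamma$ must equal some $\kappa_\xi$: ruling out certifying sequences at other $\gamma$'s---notably $\omega_1$-cofinal limits of the $\kappa_\xi$'s that might be supplied by proper limit iterands $R(\xi)$---requires a careful analysis combining properness, countable support, the form of Definition \ref{appropriate}, and the stationary-preservation from Remark \ref{sprem}.
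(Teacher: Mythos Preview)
Your argument for the forward direction of (1) has a genuine gap, and the approach cannot be repaired along the lines you sketch. The claim that ``the proper limit iterands $R(\xi)$ add no fresh certifications at new ordinals'' is false: $R(\xi)$ is whatever the Laver function hands you, so it can be any proper forcing whatsoever, in particular one of the form $Q(\vec A(S'),\gamma')$ for $\gamma'$ not among the $\kappa_\xi$'s. A certification can also be assembled across many stages rather than added by a single iterand. And ``comparing the two certifying sequences at $\kappa_\xi$ forces $\dot S_{\xi,g_\xi}\subseteq S$'' is simply wrong: two certifying sequences at the same ordinal impose no containment relation between the sets they certify.

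The paper's route is entirely different: a density argument exploiting the freedom in the \emph{first} coordinate. Given $\gamma$, a condition $p$, a $\beta$ which $p$ forces out of $\dot S$ (and, when $\gamma=\kappa_{\xi_0}$, chosen so that $p$ also forces $\beta\in\dot S_{\xi_0}$, witnessing $\dot S_{\xi_0}\not\subseteq\dot S$), a name $\dot b$ for a bijection $\omega_1\to\gamma$, a club name $\dot C$, and a countable $X\prec H_\theta$ containing all of this, one builds an $(X,\bbP_0)$-generic $q\leq p(0)$ with $q(\otp(X\cap\kappa_\xi))=\alpha_\xi$ for each $\xi\in X$ with $\kappa_\xi\neq\gamma$, and $q(\otp(X\cap\gamma))=\beta$. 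Since $\beta\in\dot S_{\xi_0}$ in the second case, this $q$ still satisfies what the proof of Lemma~\ref{lemma} actually uses, and that lemma extends $q$ to an $(X,\bbP_\delta)$-generic $r\leq p$. Then $r$ forces $X\cap\omega_1\in\dot C$, $\dot b[X\cap\omega_1]=X\cap\gamma$, and $g_0(\otp(X\cap\gamma))=\beta\notin S$, so no certification of $S$ at $\gamma$ can survive. The same device, with the one extra constraint $q(X\cap\omega_1)>\otp(X\cap\delta)$, shows directly that $g_0\colon\omega_1\to\omega_1$ is not club-dominated by any canonical function, giving $\neg\CFB$; your diagonalization sketch never identifies the witnessing function and would need this idea in any case.
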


\begin{proof}
The reverse direction of part (1) follows from Lemma \ref{semi-proper}, and the fact that being certified at some $\gamma$ via some $\vec{A}$ is upwards absolute. For the forward direction of (1), fix 
\begin{itemize}
	\item $\eta < \delta$
	\item a $\bbP_{\delta}$-name $\dot{S}$ for a subset of $\omega_{1}$,
	\item $\gamma < \delta$,
	\item a condition $p \in \bbP_{\delta}$, 
	\item a countable ordinal $\beta$ which $p$ forces not to be in $\dot{S}$,   
	\item a $\bbP_{\delta}$-name $\dot{C}$ for a club subset of $\omega_{1}$ and 
	\item a $\bbP_{\delta}$-name $\dot{b}$ for a bijection between $\omega_{1}$ and $\gamma$. 
\end{itemize}. 
Let $\theta$ be a regular cardinal greater than $2^{|\bbP_{\delta}|}$ and let $X$ be a countable elementary submodel of $H_{\theta}$ with all the objects named above in $X$. Every $(X, \bbP_{\delta})$-generic condition forces that $\omega_{1} \cap X$ is in $\dot{C}$ and $\dot{b}[\omega_{1} \cap X] = X \cap \gamma$. It suffices to find, under the assumption that either (Case 1) $\gamma$ is not equal to any $\kappa_{\xi}$ or (Case 2) $\gamma$ is equal to some $\kappa_{\xi}$ and $p$ forces $\beta$ to be in $\dot{S}_{\xi}$, an $(X, \bbP_{\delta})$-generic condition $r \leq p$ forcing that $g(0)(\otp(X \cap \gamma)) \not\in \dot{S}$. Since the proofs of the two cases are similar we do them simultaneously. 

Fix $q <_{{\mathbb P}_0} p(0)$ such that
\begin{enumerate}
	\item $q$ is $(X,{\mathbb P}_0)$-generic,
	\item ${\rm dom}(q) = \otp(X \cap \delta)$,
	\item for each $\xi \in X \cap \delta$, if $\kappa_{\xi} \neq \gamma$ then $q(\otp(X \cap \kappa_{\xi})) = \alpha_{\xi}$ and 
	\item  $q({\rm otp}(X \cap \gamma))=\beta$.  
\end{enumerate}
Construing $p$ as $(p(0), \dot{p})$, we have that hypotheses (a) through (d) of Lemma \ref{lemma} are satisfied.
If $r$ is then as being given by the conclusion of Lemma \ref{lemma}, then $r$ is as desired. 

The reverse direction of part (2) follows from part (1) and the construction, and the fact that the tails of the iteration are semi-proper. 
(see Remark \ref{sprem}). 
The forward direction is by our bookkeeping. For each stationary $S \subseteq \omega_{1}$ in $V[g]$ there exist $\eta_{*} < \delta$ and $\alpha_{*} < \omega_{1}$ (any member of $S$) such that $S$ is the $g_{\eta}$-realization of some element $\dot{S}$ of $N_{\eta_{*}, \alpha_{*}}$. 
Working by induction, it suffices to suppose that the realization of each member of $N_{\eta_{*}, \alpha_{*}}$ which is $\leq_{\eta_{*}}$-below $\dot{S}$ is certified in $V[g]$. We may then let $\rho \in [\xi_{*}, \delta)$ be such that all of these certifying sets exist in $V[g \restrict \bbP_{\rho}]$. For cofinally many successor ordinals $\xi < \delta$, $\pi(\xi) = (\eta_{*}, \alpha_{*})$. For any such $\xi \geq \rho$, we have by Lemma \ref{semi-proper2} that $S$ is certified at $\kappa_{\xi}$ in $V[g \restrict (\xi + 1)]$. 

For part (3) that $V[g]$ is a model of ${\sf PFA}$ follows by the standard consistency proof for ${\sf PFA}$. The boldface $\Pi_1^{H_{\aleph_2}}$-definability of 
${\sf NS}_{\omega_1}$ in $V[g]$
follows from (2), the parameter being $g(0)$ (or the partition of $\omega_{1}$ induced by $g(0)$). The failure of $\CFB$ follows by adding $q(X \cap \omega_{1}) > \otp(X \cap \delta)$ to the argument for the forward direction of (1) (we may assume that $\gamma \geq \omega_{2}$ there, since only club subsets of $\omega_{1}$ have $\omega_{1}$ in their tildes).
\end{proof}

The argument provided here isn't tied to the nonstationary ideal. We could code any subset of ${\mathcal P}(\omega_1)$ in the fashion above, as long as the subset is closed under supersets and has a definition which is absolute to stationary set-preserving extensions.

\noindent Institut f\"ur Mathematische Logik, Universit\"at M\"unster, Einsteinstr. 62, FRG.

\noindent\text{stefan.hoffelner@gmx.at}

\bls
\noindent Department of Mathematics, Miami University, Oxford, Ohio 45056

\noindent \text{larsonpb@miamioh.edu}

\bls

\noindent Institut f\"ur Mathematische Logik, Universit\"at M\"unster, Einsteinstr. 62, FRG.

\noindent rds@uni-muenster.de

\bls

\noindent Institute of Mathematics, Academy of Mathematics and Systems Science,\\ Chinese Academy of Sciences, Beijing 100190, China

\smallskip

\noindent School of Mathematical Sciences, University of Chinese Academy of Sciences, Beijing 100049, China 

\noindent lzwu@math.ac.cn

\noindent\text{}
\end{document}